\documentclass[10pt]{article}
\usepackage[english]{babel}
\usepackage[latin9]{inputenc}
\usepackage[dvips]{graphics}
\usepackage{makeidx}
\usepackage{index}
\usepackage{amsfonts, amsthm,amssymb,amscd,amsmath}
\usepackage{stmaryrd}
\usepackage[all]{xy}
\newtheorem{Def}{Définition}[section]

\newtheorem{thm}[Def]{Théorème}
\newtheorem{prop}[Def]{Proposition}

\newtheorem{rk}[Def]{Remarque}

\newtheorem{conj}{Conjecture}

\newcommand{\A}{ \mathbb A }

\newcommand{\C}{ \mathbb C}
\newcommand{\F}{\mathbb F}
\newcommand{\G}{ \mathbb G }
\newcommand{\Q}{\mathbb Q}

\newcommand{\oo}{ \mathbb O}

\newcommand{\p}{\mathbb P}

\newcommand{\Z}{\mathbb Z}

\newcommand{\ca}{{\cal A}}

\newcommand{\cc}{{\cal C}}

\newcommand{\ce}{{\cal E}}
\newcommand{\cf}{{\cal F}}

\newcommand{\ck}{{\cal K}}
\newcommand{\cl}{{\cal L}}
\newcommand{\cm}{{\cal M}}

\newcommand{\co}{{\cal O}}
\newcommand{\cp}{{\cal P}}

\newcommand{\cv}{{\cal V}}

\newcommand{\im}{\hbox{im}}

\newcommand{\supp}{\hbox{supp}}

\newcommand{\cox}{{{\cal O}_X}}

\newcommand{\cob}{{\cal O}_{{\rm Pic}^dX\times \C^g}}

\newcommand{\coxxt}{{{\cal O}_{X\times_\C T}}}

\newcommand{\s}{\mathfrak{S}}

\newcommand{\noi}{\noindent}
\newcommand{\disp}{\displaystyle}

\newcommand{\commutatif}{\ar@{}[rd]|{\circlearrowleft}}

\newcommand{\eq}[1][r]
   {\ar@<-3pt>@{-}[#1]
    \ar@<-1pt>@{}[#1]|<{}="gauche"
    \ar@<+0pt>@{}[#1]|-{}="milieu"
    \ar@<+1pt>@{}[#1]|>{}="droite"
    \ar@/^2pt/@{-}"gauche";"milieu"
    \ar@/_2pt/@{-}"milieu";"droite"}
\def\fract#1/#2{\hbox{\leavevmode
  \kern.1em \raise .25ex \hbox{\the\scriptfont0 $#1$}\kern-.1em }\big/
  {\hbox{\kern-.15em \lower .5ex \hbox{\the\scriptfont0 $#2$}} }}

\def\ifract#1/#2{\hbox{\leavevmode
  \kern.1em \lower .5ex \hbox{\the\scriptfont0 $#1$}\kern-.1em }\big\backslash
  \hbox{\kern-.15em \raise .3ex \hbox{\the\scriptfont0 $#2$}} }
\def\iifract#1/#2{\hbox{\leavevmode
\kern.1em \lower .5ex \hbox{\the\scriptfont0 $#1$}\kern-.1em }\big\backslash\big\backslash
  \hbox{\kern-.15em \raise .3ex \hbox{\the\scriptfont0 $#2$}} }
\def\dblfract#1/#2/#3{\hbox{\leavevmode
  \kern.1em \lower .5ex \hbox{\the\scriptfont0 $#1$}\kern-.1em }\big\backslash
  \hbox{\kern-.15em \raise .3ex \hbox{\the\scriptfont0 $#2$}}\big/
  \hbox{\kern-.15em \lower .5ex \hbox{\the\scriptfont0 $#3$}} }

\def\bas#1{\hbox{\leavevmode
  \kern.1em \lower .8ex \hbox{\the\scriptfont0 $#1$}\kern-.1em }}
\def\bass#1{\hbox{\leavevmode
  \kern.1em \lower .2ex \hbox{\the\scriptfont0 $#1$}\kern-.1em }}

\def\addnotation #1:#2:#3{ \parbox{2.5cm}{#1}  \parbox{5.3in}{#2 \dotfill \pageref{#3}}\\}
\def\notation #1#2#3 {\parbox{2.5cm}{{#1}}\parbox{5.2in}{{#2}\dotfill {#3}}\\}

\title{\LARGE The geometric correspondence in some special cases}
\author{\normalsize C\'ecile Poirier}

\date{\normalsize}

\setlength{\oddsidemargin}{-0pt}
\setlength{\evensidemargin}{-0pt}
\setlength{\textwidth}{450pt}
\setlength{\textheight}{600pt}
\setlength{\headheight}{15pt}
\setlength{\headsep}{30pt}

\begin{document}
\maketitle
\begin{abstract}
The geometric Langlands correspondence for function fields over finite fields has been proved by Frenkel, Gaitsgory, Vilonen in \cite{FGV}. The aim of this article is to write translation for curves over $\C$ and prove the correspondence in some special cases and some new cases, outside the frame of the usual geometric Langlands correspondence.
\end{abstract}

\section{The geometric version of Langlands conjecture}
We start by recalling the geometric Langlands conjecture expressed in terms of Hecke operators. We refer to the book 'introduction to the Langlands program' \cite{BG} for this first part.
We consider a projective smooth curve $X/\F_q$ of genus $g>0$ which is geometrically irreducible, that is to say such that $X\times_{\F_q} {\overline\F_q}$ is irreducible. 
Note that we write $X\times_{\F_q} {\overline\F_q}$ instead of $X\times_{{\rm Spec}(\F_q)}{\rm Spec}( {\overline\F_q})$.
The stack of rank $n$ vector bundles is denoted by ${\rm Bun}_{n,X}$ and we denote by ${\rm Bun}_{n,X}(\F_q)$ (or simply ${\rm Bun}_n(\F_q)$) the set of its $\F_q$-points. Recall that an $\F_q$-point of the stack ${\rm Bun}_{n,X}$ is a morphism of stacks
\[\underline{{\rm Spec}(\F_q)}\rightarrow {\rm Bun}_{n,X}\]
Such a morphism is determined by the image of the identity map which is an object of ${\rm Bun}_{n,X}$ over $X\times Spec(\F_q)$ therefore an $\F_q$-point of ${\rm Bun}_{n,X}$ can be identified with a vector bundle of rank $n$ over $X$.

\smallskip

\noi For $i=1\ldots n$, one can define a stack $Hecke_i$ over $Sch_\C$ such that for any scheme $T$ over $\F_q$ $Hecke_i(T)$ denote the set of tuples $(\cm,\cm',x)$, where  $x$ is closed point of $X$ and $\cm,\cm'$ are vector bundles of rank $n$ over $X\times_{\F_q}T$ satisfying $\cm'\subset\cm$ and $\fract \cm/{\cm'}\simeq\left({\fract \co_{X\times_{\F_q}T}/{\co_{X\times_{\F_q}T}(-[{x}\times T])}}\right)^i$.\\
\noi(Cf \cite{Lau} or \cite{Fre})

\vskip0.3cm

\noi The $i$-th Hecke correspondence is given by:
$$\xymatrix{&Hecke_i \ar[rd]^{supp\times h^{\rightarrow}}\ar[ld]_{h^{\leftarrow}}&\\ {\rm Bun}_{n,X}& & X\times {\rm Bun}_{n,X}}$$
where $h^{\leftarrow}(\cm,\cm',x)=(\cm)$, $h^{\rightarrow}(\cm,\cm',x)=\cm'$ and $\supp(\cm,\cm',x)=x$.
Let $x\in|X|$ and write $Hecke_{i,x}=supp^{-1}(x)$. This gives a correspondence between $\{x\}\times {\rm Bun}_{n,X}$ and ${\rm Bun}_{n,X}$. We can show that the double quotient 
$$\dblfract {\rm GL}_n(F)/{\rm GL}_n(\A)/{{\rm GL}_n(\oo)}$$
is in bijection with the set of (equivalence classes of) $\F_q$-points of ${\rm Bun}_{n,X}$. Therefore the diagram defines an operator on the space of functions on $\dblfract {\rm GL}_n(F)/{\rm GL}_n(\A)/{{\rm GL}_n(\oo)}$ which associates to $f$ the function $h^{\rightarrow}_!((h^{\leftarrow})^*f)$. The operator $h^{\rightarrow}_!$ is the integration of the function along the fibres of $h^{\rightarrow}$. We can check that this operator is precisely the $i$-th Hecke operator $H_{i,x}$. The (perverse) sheaves corresponding to the automorphic functions associated to the automorphic representations (which occur in the ordinary Langlands correspondence) are therefore the eigensheaves of the Hecke correspondence (see the appendix \ref{langlands} for more details ok?)

\smallskip

\noi Let $\ck$ be a perverse sheaf on ${\rm Bun}_{n,X}$ and let $H_i(\ck)=(supp\times h^{\rightarrow})_!h^{\leftarrow*}(\ck)$, where $(supp,h^\rightarrow)_!$ denotes the pushforward with compact support. This is again a perverse sheaf on ${\rm Bun}_{n,X}$. The geometric form of the Langlands correspondence can be stated as follows.
\begin{thm}
Let $X$ be an irreducible smooth projective curve over $\F_q$. For any irreducible local system $E$ on $X$ of rank $n$, there exists a (unique) perverse sheaf $\ck$ on ${\rm Bun}_{n,X}$, irreducible on every connected component of ${\rm Bun}_{n,X}$, such that $\forall i=1...n$, we have $H_i(\ck)\simeq (\wedge^i E)\boxtimes \ck$
\end{thm}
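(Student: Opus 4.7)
The plan is to follow the Drinfeld--Laumon--Frenkel--Gaitsgory--Vilonen construction: build the eigensheaf $\ck$ explicitly from $E$ via intermediate stacks of coherent sheaves and divisors, check the Hecke relation on a generic locus, and then invoke a vanishing theorem to transfer irreducibility and the Hecke property to the whole of ${\rm Bun}_{n,X}$.

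The first step is to package $E$ into a family of perverse sheaves on symmetric powers of $X$. For each $d\geq 0$, the local system $E^{\boxtimes d}$ on $X^d$ descends to a perverse sheaf $E^{(d)}$ on $X^{(d)}$ as $\s_d$-invariants of the pushforward, and similarly the $\wedge^i E$ produce sheaves on the partitioned symmetric powers $X^{(d-i)}\times X^{(i)}$. These encode exactly the spectral data one wants to see: the key combinatorial identity is that pullback-and-pushforward along $X^{(d-i)}\times X^{(i)}\to X^{(d)}$ realizes the $\wedge^i E$-action supported at a varying point of $X$, which is precisely the shadow of the Hecke correspondence $Hecke_i$ at the level of divisors.

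The second step is to construct Laumon's sheaf $\cl_E^d$ on the stack $\mathrm{Coh}_n^d$ of coherent sheaves of generic rank $n$ and degree $d$. Since such a sheaf generically admits a complete flag of subsheaves with length-one successive quotients, one has a Springer-type resolution of an open substack mapping to a product of symmetric powers; pulling back $E^{(\bullet)}$ through this resolution and performing the appropriate $!$- or $*$-pushforward defines $\cl_E^d$. The candidate eigensheaf $\ck_d$ is then the restriction of $\cl_E^d$ along the open immersion ${\rm Bun}_{n,X}^d\hookrightarrow\mathrm{Coh}_n^d$, after first passing through Laumon's nilpotent compactification in order to have a clean intermediate extension on the source.

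The principal obstacle is the vanishing theorem of FGV, which asserts that $\cl_E^d$ is (a shift of) the intermediate extension of its restriction to the generic locus of $\mathrm{Coh}_n^d$. This is what makes $\ck_d$ an irreducible perverse sheaf on each connected component and what allows the Hecke eigenproperty --- immediate on the generic stratum by the factorization structure of $E^{(\bullet)}$ --- to extend across the whole of ${\rm Bun}_{n,X}$. Irreducibility of $E$ is used essentially at this point, through the genericity (nonvanishing) of the associated Whittaker functional on the relevant Whittaker-model stack. Once the Hecke relation holds globally, uniqueness is formal: the Hecke groupoid acts transitively on the generic stratum of ${\rm Bun}_{n,X}$, so the stalk of $\ck$ at one smooth point determines $\ck$ up to a scalar on each component.
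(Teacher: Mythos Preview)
The paper does not prove this theorem. It is stated as a known result and immediately followed by the citation ``(Cf \cite{Fre}, \cite{Lau} and \cite{FGV})''; the rest of the paper is devoted to formulating a complex-analytic analogue and proving only the special cases $n=1$ (the $\G_m$ case) and certain rank-$2$ connections with Galois group in $\G_a\times\G_m$. So there is no proof in the paper to compare your proposal against.

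Your outline is a fair high-level summary of the Drinfeld--Laumon--FGV strategy that the cited references carry out: building $E^{(d)}$ on symmetric powers, constructing Laumon's sheaf on $\mathrm{Coh}_n^d$ via a Springer-type resolution, restricting to ${\rm Bun}_{n,X}^d$, and invoking the FGV vanishing theorem to obtain irreducibility and propagate the Hecke eigenproperty. As such it is not wrong in spirit, but it is a sketch rather than a proof, and it goes well beyond anything the present paper attempts. If the intent was to reproduce the paper's argument, you should simply cite \cite{FGV}, \cite{Lau}, \cite{Fre} as the paper does; if the intent was to actually prove the theorem, each of the steps you list (especially the vanishing theorem and the precise construction on $\mathrm{Coh}_n^d$) requires substantial work that your outline does not supply.
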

\smallskip
\noi (Cf \cite{Fre}, \cite{Lau} and \cite{FGV})

\noi The authors E. Frenkel, D. Gaitsgory and K.Vilonen assert that an analogous theorem holds when the finite field $\F_q$ is replaced by any field of zero characteristic. Our aims are the following: 

The aim of this article is to find a consistent translation of the theorem over $\C$ and to give proofs for some special cases.

We write $\widehat{ \pi_1(X)}$ for the completion of the fundamental group. Note that local systems occurring in the geometric Langlands conjecture correspond to continuous homomorphisms 
$$\rho:\widehat{ \pi_1(X)}\rightarrow {\rm GL}_n(\overline {\Q_l})$$
for a certain prime $l$ (and such that the image of this homomorphism is in fact contained in ${\rm GL}_n(E)$ where $E$ is a finite extension of $\Q_l$).
Note that we can identify $\overline{\Q_l}$ with $\C$ but there is no canonical way to do so.  For the translation in the complex case, we consider representations
$$\rho':\pi_1(X)\rightarrow {\rm GL}_n(\C)$$ which induces a (complex) local system.

\noindent Not all the homomorphisms 
$$\tau:\pi_1(X)\rightarrow {\rm GL}_n(\C)\simeq {\rm GL}_n(\overline{\Q_l})$$
do extend to a continuous homomorphism $\widehat{ \pi_1(X)}\rightarrow {\rm GL}_n(\overline {\Q_l})$. In this thesis, we choose to work with the largest class of complex local systems which corresponds to representations of $\pi_1(X)$. 
There exists a corresponding notion of perverse sheaf on a complex variety which is needed for the general case. 
\begin{conj}
Let $X$ be a smooth irreducible projective curve over $\C$ and let $E$ be an irreducible local system of rank $n$ over $X$. Then there exists a (unique) perverse sheaf $\ck$ on ${\rm Bun}_{n,X}$ irreducible on every connected component of ${\rm Bun}_{n,X}$ such that $\forall i=1...n$, we have $H_i(\ck)\simeq (\wedge^i E)\boxtimes \ck$.
\end{conj}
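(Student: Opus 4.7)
The plan is to transport to the complex-analytic setting the proof of the $\F_q$-analogue due to Frenkel, Gaitsgory and Vilonen \cite{FGV}, working with $\C$-constructible perverse sheaves on ${\rm Bun}_{n,X}$ (or equivalently, by Riemann--Hilbert, with regular holonomic $\cd$-modules) instead of $\overline{\Q_l}$-adic ones. Three ingredients must be reproduced: a construction of a candidate automorphic sheaf $\ck$ associated to $E$; the verification that it is a Hecke eigensheaf on a large open substack; and a Whittaker-type vanishing theorem that extends both the perversity and the eigen-property to all of ${\rm Bun}_{n,X}$ and forces uniqueness on each connected component.

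\textbf{Construction.} First I would recall the rank one case, where geometric class field theory gives the unique rank one eigensheaf $\mathrm{AJ}_* E$ on $\mathrm{Pic}\,X$ by a characteristic-free argument based on the Abel--Jacobi map. For general $n$, I would introduce Laumon's sheaf $\cl_E^{(d)}$ on the stack $\mathrm{Coh}_0^d$ of length-$d$ torsion coherent sheaves on $X$: on the open substack parametrising sheaves $\bigoplus_i \co_{x_i}^{\oplus a_i}$ at distinct points, it is the $\s$-equivariant descent of $\bigotimes_i\wedge^{a_i}E_{x_i}$, and globally it is the middle perverse extension, available over $\C$ by the theory of complex perverse sheaves on stacks. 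The candidate eigensheaf on a component ${\rm Bun}_{n,X}^d$ of sufficiently large degree is then obtained as the perverse descent of $\cl_E^{(d-d_0)}\boxtimes\ck_0$ along the ``extension'' morphism $\mathrm{Coh}_0^{d-d_0}\times{\rm Bun}_{n,X}^{d_0}\rightarrow{\rm Bun}_{n,X}^d$, starting from a suitable sheaf $\ck_0$ on a fixed low-degree component; for $d\gg 0$ a Riemann--Roch dimension count shows this map is smooth with contractible fibres over a dense open substack, so the descent is perverse there.

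\textbf{Hecke eigen-property and uniqueness.} On that open substack, the identity $H_i(\ck)\simeq(\wedge^i E)\boxtimes\ck$ follows from a local calculation on $\mathrm{Coh}_0$: the elementary length-$i$ Hecke modification at $x$ amounts to adding a torsion summand $\co_x^{\oplus i}$, and by construction of $\cl_E^{(d)}$ as a symmetrised tensor power this is matched on the Laumon side by exterior product with $\wedge^i E_x$. Propagation of the eigen-relation from one component to all others uses that the Hecke operators shift the degree by $i$ and that twists by line bundles relate the remaining components. Uniqueness on each component then comes from a Schur-type argument for irreducible $E$: a nonzero morphism between two eigensheaves that intertwines every $H_i$ must be an isomorphism of irreducible perverse sheaves, pinning down $\ck$ up to scalar after a choice of normalisation.

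\textbf{The main obstacle.} The hard part will be the complex analogue of the Whittaker-type vanishing theorem of \cite{FGV}, which is needed to control the descent where the extension map ceases to be smooth --- that is, to extend both the perversity of $\ck$ and the Hecke eigen-relation from the dense open substack to all of ${\rm Bun}_{n,X}$. In \cite{FGV} this argument runs via Drinfeld's compactification of the Whittaker model, cleanness of the Whittaker sheaf on it, and Artin-type vanishing for $\ell$-adic middle extensions. Transposing it to $\C$ requires replacing Artin vanishing and the decomposition theorem for mixed $\ell$-adic sheaves by their complex analytic counterparts --- Beilinson--Bernstein--Deligne, and if necessary Saito's mixed Hodge modules --- and verifying that Drinfeld's compactification, the Pl\"ucker-type relations and the stratification-by-defect arguments of \cite{FGV} are purely geometric and thus characteristic-free. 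The main technical obstacle lies in this vanishing statement; modulo it, the construction and uniqueness arguments above establish the conjecture as stated.
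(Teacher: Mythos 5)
The statement you are trying to prove is stated in the paper as a \emph{conjecture}, not a theorem: the paper offers no proof of it, and instead only establishes special cases by elementary means (the rank one case on ${\rm Pic}\,X$ via the isomorphism $\pi_1(X^{(n)})\simeq\pi_1({\rm Pic}^nX)$ induced by the Abel--Jacobi fibration, and a rank two case with differential Galois group in $\G_a\times\G_m$), working throughout with connections and representations of $\pi_1(X)$ rather than with perverse sheaves. Your text is therefore not comparable to a proof in the paper; it has to stand on its own, and it does not. You yourself flag the decisive gap: the complex analogue of the Whittaker-type vanishing theorem of \cite{FGV}, which is what extends perversity and the eigen-property beyond the open locus where the Laumon construction is smooth. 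A statement proved ``modulo the main technical obstacle'' is a research programme, not a proof, and every other step of your sketch (perversity of the descent, the local computation on $\mathrm{Coh}_0$, the Schur-type uniqueness) is asserted rather than carried out.

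There is also a more structural problem with transporting \cite{FGV} wholesale. The conjecture as formulated here concerns \emph{all} irreducible rank $n$ representations of the topological fundamental group $\pi_1(X)$, and the paper is explicit that this class is strictly larger than the class of representations extending continuously to $\widehat{\pi_1(X)}$, which is what corresponds to the $\ell$-adic local systems of \cite{FGV}. The key inputs you invoke on the $\ell$-adic side --- purity, the decomposition theorem for mixed sheaves, Artin-type vanishing for middle extensions --- are theorems about sheaves of geometric origin or mixed sheaves; for an arbitrary complex local system none of these is available off the shelf, and you would need to either restrict the class of $E$ or supply substitutes (semisimplicity-based decomposition theorems, $\cd$-module techniques) whose applicability you do not verify. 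Until both the vanishing theorem and this change of coefficient formalism are actually addressed, the conjecture remains exactly that.
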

We will be interested in special cases of the theorem where only local systems on the set ${\rm Bun}_{n,X}(\C)$ of $\C$-points of the stack ${\rm Bun}_{n,X}$ are involved, in particular the case $n=1$. We have seen that the category of local systems is equivalent to the category of connections and therefore we have a similar statement when replacing the local system $E$ by a connection $(\cv,\nabla)$.

\smallskip 

In the case $n=1$, we will state and prove the theorem for the algebraic variety ${\rm Pic}X$ instead of the set ${\rm Bun}_{1,X}(\C)$ of $\C$-points of the stack ${\rm Bun}_{1,X}$ and connections, replacing complex local systems. Replacing ${\rm Bun_{1,X}}$ by ${\rm Pic}X$ is legitimate since the first set is the set of line bundles on $X$ and the second is the set of equivalence classes of line bundles on $X$.

\section{The $\G_m$ case}
In this section, we are interested in connections of rank 1 on the curve $X$ of genus $g>0$. Such a connection has a differential Galois group contained in $\G_m$ (Cf \cite{vdPS}). 

Because all the vector bundles involved are of rank 1, only the stack ${\it Hecke_1}$ is non empty. Moreover, the map ${\it Hecke_1(\C)}\rightarrow X\times {\rm Bun}_{1,X}(\C)$ is an isomorphism. Indeed, if $\cm'\subset\cm$ are two line bundles satisfying $\fract\cm/\cm'\simeq \fract\cox/{\cox(-[x])}$, we have $\cm(-[x])\subset \cm'\subset \cm$ and this imposes $\cm'=\cm(-[x])$.\\
Thus 
$$Hecke_1(\C)=\{(\cm,\cm(-[x]),x), x\in|X|\hbox{ and }\cm \hbox{ line bundle} \}.$$
and we have the map 
$$H_1:X\times {\rm Bun}_{1,X}\rightarrow {\rm Bun}_{1,X},$$ given by $H_1(x,\cl)=\cl([x])$. We write again $H_1$ for the map
$$H_1:X\times {\rm Pic}X\rightarrow {\rm Pic}X,$$ given by $H_1(x,[\cl])=[\cl([x])]$.

\begin{prop}\label{casgm}
Let $(\ce,\nabla)$ be a rank 1 connection on $X$. Then there exists a unique (integrable) connection $(\ck,\nabla_\ck)$ of rank 1 on ${\rm Pic}X$ satisfying $H_1^*\ck\simeq \ce\boxtimes \ck$, where the isomorphism denotes an isomorphism of connections.
\end{prop}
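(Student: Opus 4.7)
The plan is to construct $\ck$ component by component on ${\rm Pic}X=\bigsqcup_d {\rm Pic}^d X$, using the Abel--Jacobi map $a_d:X^{(d)}\to {\rm Pic}^d X$ as the main tool. For $d>2g-2$ this map is a smooth projective morphism with fibres $|L|\simeq \Proj^{d-g}$, and it is a good candidate for a descent procedure.

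The first step is to obtain a rank $1$ integrable connection on $X^{(d)}$: the connection $\ce^{\boxtimes d}$ on $X^d$ is canonically $\s_d$-equivariant and so descends to a rank $1$ connection $\ce^{(d)}$ on $X^{(d)}$. The second step is to descend $\ce^{(d)}$ further along $a_d$ to ${\rm Pic}^d X$. I would check that the restriction of $\ce^{(d)}$ to every fibre of $a_d$ is the trivial connection on the trivial line bundle: ${\rm Pic}(\Proj^{d-g})=\Z$ permits only the trivial line bundle to carry an integrable connection, and $H^0(\Proj^{d-g},\Omega^1)=0$ forces that connection to be trivial. Faithfully flat descent along $a_d$ then produces the desired rank $1$ integrable connection $\ck_d$ on ${\rm Pic}^d X$ with $a_d^*\ck_d\simeq \ce^{(d)}$.

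The Hecke property for $d>2g-1$ comes from descending the tautological identity $s^*\ce^{(d)}\simeq \ce\boxtimes \ce^{(d-1)}$ across the Cartesian diagram
\[\xymatrix{X\times X^{(d-1)} \ar[r]^{s}\ar[d]_{\mathrm{id}\times a_{d-1}} & X^{(d)} \ar[d]^{a_d} \\ X\times{\rm Pic}^{d-1}X \ar[r]^{H_1} & {\rm Pic}^d X}\]
with $s(x,D)=[x]+D$. For $d\leq 2g-2$ I would extend by fixing a base point $x_0\in X$ and setting $\ck_d:=\tau_{x_0}^*\ck_{d+1}\otimes \ce_{x_0}^{-1}$, where $\tau_{x_0}:{\rm Pic}^dX\to {\rm Pic}^{d+1}X$ sends $\cl$ to $\cl([x_0])$; independence of $x_0$ and the Hecke property on the extended collection follow from the already-established case $d>2g-1$ and the functoriality of the translations in ${\rm Pic}X$. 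For uniqueness, if $\ck$ and $\ck'$ both satisfy the conclusion, the ratio $\cm=\ck\otimes\ck'^{-1}$ satisfies $H_1^*\cm\simeq \co_X\boxtimes\cm$; restricting to a slice $X\times\{\cl_0\}$ shows that the pullback of $\cm|_{{\rm Pic}^dX}$ along the Albanese-type map $x\mapsto \cl_0([x])$ is the trivial connection on $X$, and since this map induces a surjection $\pi_1(X)\twoheadrightarrow \pi_1({\rm Pic}^dX)$ (the abelianization), $\cm$ has trivial monodromy and is the trivial connection.

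The main obstacle I anticipate is the descent of $\ce^{(d)}$ along $a_d$: even granted the fibrewise triviality, one must produce actual descent data for the connection (not just for the underlying bundle) and check its cocycle condition, bearing in mind that $a_d$ is only smooth rather than \'etale. A related technical point is to make sure that every isomorphism built along the way is an isomorphism of connections, so that the Hecke condition holds at the level of $\nabla$ and not merely as an isomorphism of line bundles.
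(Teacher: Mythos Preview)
Your overall strategy coincides with the paper's (both follow Deligne): build $\ce^{(d)}$ on $X^{(d)}$, push it down to ${\rm Pic}^d X$ for $d>2g-2$, verify the Hecke property via the commutative square relating $s$ (your map, the paper's $\beta_{d-1}$) and $H_1$, and then propagate to small $d$. The genuine difference is in how the push-down is executed. You propose faithfully flat descent along $a_d$, checking fibrewise triviality and then assembling descent data for the connection---and you correctly flag this as the delicate point. The paper bypasses it: the long exact sequence of the fibration $a_d$ (using $\pi_1(\p^{d-g})=0$ and asphericity of ${\rm Pic}^d X$) gives an isomorphism $\pi_1(X^{(d)})\cong\pi_1({\rm Pic}^d X)$, and the equivalence between integrable connections and representations of $\pi_1$ transports $\ce^{(d)}$ to $\ck_d$ directly, with no cocycle to check and with the isomorphism $a_d^*\ck_d\simeq\ce^{(d)}$ automatically one of connections. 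Two smaller points: the square you display is commutative but not Cartesian (a divisor $D\in a_d^{-1}(L([x]))$ need not contain $x$), though commutativity is all the argument needs once you know ${\rm id}\times a_{d-1}$ also induces a $\pi_1$-isomorphism; and for $d\le 2g-2$ the paper takes a different route, writing $H_{1,d}^*\ck_{d+1}\simeq \cf\boxtimes\ck_d$ abstractly and then identifying $\cf\simeq\ce$ via the two factorizations of $(x,y,\cl)\mapsto\cl([x]+[y])$, rather than translating by a base point and twisting by a fibre as you do. Your uniqueness argument via monodromy is a welcome addition; the paper asserts uniqueness but does not argue it.
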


\noindent To construct $\ck$, we imitate Deligne's proof of the Langlands geometric correspondence for ${\rm GL}_1$ ( Cf \cite{Fre}). Denote by $\ck_d$ the restriction of $\ck$ to ${\rm Pic}^d X$. Let $n$ denote an integer such that $n>2g-2$ where $g$ is the genus of $X$. 
\subsection{Preliminary results}
As before, $X$ denotes a smooth projective irreducible curve over $\C$.
Let $X^n$ denote the product of $n$ copies of $X$ where we assume that $n>2g-2$. On this algebraic variety of dimension $n$, there an obvious connection $\ce^n:=\ce\boxtimes...\boxtimes \ce$ of rank 1 associated to the connection $\ce$ on $X$. The group of permutations $\s_n$ clearly acts on $X^n$ and $ \ce^n$. This induces an integrable connection $\ce^{(n)}$ on the quotient $X^{(n)}=X^n/\s_n$ which is non singular (Cf \cite{Mil}). This connection is defined locally by 
\[\ce^{(n)}(\pi_n(U)):=\ce^n(\pi_n^{-1}(\pi_n(U))^{\s_n}\] 
where $\pi_n:X\rightarrow X^{(n)}$ and $U$ is an open of $X^n$. In other words, $\ce^{(n)}$ is defined locally as the elements of $\ce^n$ invariant under $\s_n$.
Let $B_n$ denote the natural map $X\times X^n\rightarrow X^{n+1}$. Write $(\overline{x_1,..,x_n})$ the image of $(x_1,..,x_n)$ in $X^n/\s_n$, and
$$\begin{array}{cccc}
\beta_n:& X\times X^{(n)}&\rightarrow & X^{(n+1)}\\ & (x,(\overline{x_1,..,x_n}))& \mapsto & (\overline{x,x_1,..,x_n})
\end{array}$$

\vskip0.2cm 
One can check that $\beta_n^*\ce^{(n+1)}\simeq \ce\boxtimes \ce^{(n)}$.

Let $\rho$ stand for the representation of $\pi_1(X)$ associated to the connection $\ce$. Recall that we omit to mention the base point in the fundamental groups for convenience.

\vskip0.2cm

\noindent Now consider the natural morphism $f_n:X^{(n)}\rightarrow {\rm Pic}^n X$ which associates the class of the divisor $[x_1]+...+[x_n]$ to the $n$-tuple $(\overline{x_1,...,x_n})\in X^{(n)}$.
The fibres of this morphism are projective spaces of dimension $n-g$. Moreover, because $f_n:X^{(n)}\rightarrow {\rm Pic}^nX$ is a fibration for $n>2g-2$, it gives a long exact sequence of higher homotopy groups.
\begin{multline*}
\ldots\rightarrow \pi_2({\rm Pic}^nX)\rightarrow \pi_1(\p^{n-g})\rightarrow \pi_1(X^{(n)})\stackrel{p}{\rightarrow}\\
\pi_1({\rm Pic}^nX)\rightarrow \pi_0(\p^{n-g})\rightarrow \pi_0(X^{(n)})\rightarrow\pi_0({\rm Pic}^nX)
\end{multline*}
We know that $\pi_0(\p^{n-g})$ is trivial because $\p^{n-g}$ is connected. Besides, ${\rm Pic}^{n}X$ is an aspherical space, which means that all its higher homotopy groups $\pi_i({\rm Pic}^{n} X)$ for $i\geq 2$ are trivial. This can be shown by saying that ${\rm Pic}^0X$ admits $\C^{g}$ as universal cover and the higher homotopy groups (for $i\geq 2$) of ${\rm Pic}^0 X$ and $\C^g$ are isomorphic (Cf \cite{Gre}). Since $\pi_i(\C^{g})$ is trivial for all $i$ and $\pi_1(F)$ is also trivial, the long exact sequence is simply an isomorphism between $\pi_1(X^{(n)})$ and $\pi_1({\rm Pic}^nX)$. 

\subsection{Construction of the connection}
We have seen that we have an isomorphism $$\widetilde f_n:\pi_1(X^{(n)})\stackrel{\sim}{\longrightarrow } \pi_1({\rm Pic}^n X)$$ 
given by the map induced by $f_n$.
\hfill\break
We write $(\ck_n,\nabla_n)$ for the connection on ${\rm Pic}^n X$ we get from $\ce^{(n)}$ through this isomorphism. More precisely, the connection $\ce^{(n)}$ is associated to a representation $\rho_n$ of $\pi_1(X^{(n)})$. Thanks to this isomorphism, we can define a representation $\eta_n$ of $\pi_1({\rm Pic}^n X)$. The latter is associated to a connection we denote by $(\ck_n,\nabla_n)$ which is unique up to isomorphism. It satisfies $f_n^*\ck_n\cong \ce^{(n)}$ (isomorphism of connections) by equivalence between the category of connections on $X$ and the category of representations of $\pi_1(X)$.

It remains to prove that $$H_{1,n}^*\ck_{n+1}\simeq \ck_n\boxtimes \ce.$$ 
\noindent We will use the fact that the following diagram is commutative:
$$\xymatrix{X\times X^{(n)} \ar[rr]^{\beta_n} \ar[dd]_{(Id,f_n)}&& X^{(n+1)}\ar[dd]^{f_{n+1}}\\ &&\\
  X\times {\rm Pic}^n X \ar[rr]_{H_{1,n}} && {\rm Pic}^{n+1}X }$$

\vskip0.2cm
In order to prove that it implies $H_{1,n}^*\ck_{n+1}\simeq \ce\boxtimes \ck_n$, one prove that the representations associated are isomorphic.

\noindent It remains to define the connection $\ck_n$ when $n\leq 2g-2$. We will construct it using  $\ck_{2g-1}$. The connection $H_{1,2g-1}^* \ck_{2g-1}$ is integrable on $X\times {\rm Pic}^{2g-2}X$. It corresponds to a representation of
$$\pi_1(X\times {\rm Pic}^{2g-2}X)=\pi_1(X)\times \pi_1({\rm Pic}^{2g-2}X),$$
that we can decompose as the product of a representation of $\pi_1(X)$ and a representation of $\pi_1({\rm Pic}^{2g-2}X)$. Then there exists $(\cf,\nabla_\cf)$ and $(\ck_{2g-2},\nabla_{2g-2})$, two integrable connections respectively on $X$ and ${\rm Pic}^{2g-2}X$, corresponding to those two representations. The external product $\cf\boxtimes \ck_{2g-2}$ is a connection on $X\times {\rm Pic}^{2g-2}X$, so by uniqueness, we have the isomorphism of connections 
$$ H_{1,2g-1}^* \ck_{2g-1}\cong \cf\boxtimes \ck_{2g-2}.$$ 

\noindent It remains to prove that $\cf\cong \ce$. For this purpose, we consider the map
$$\begin{array}{cccc}
 f: & X\times X\times {\rm Pic}^{2g-2}&\rightarrow &{\rm Pic}^{2g}X \cr 
 & (x,y,{\cal L})&\mapsto& {\cal L}([x]+[y])\end{array}$$
and we decompose $f$ in two different ways
\begin{enumerate}
\item $f:(x,y,{\cal L})\mapsto (x,{\cal L}([y]))\mapsto {\cal L}([x]+[y])$

\item $f:(x,y,{\cal L})\mapsto (y,{\cal L}([x]))\mapsto {\cal L}([x]+[y])$
\end{enumerate}

We have thus
\begin{enumerate}
\item $\disp f^* \ck_{2g}=\ce\boxtimes \cf\boxtimes \ck_{2g-1}$ and
\item $\disp f^* \ck_{2g}=\cf\boxtimes \ce\boxtimes \ck_{2g-1}$
\end{enumerate}
By uniqueness, we have $\cf\cong \ce$, so $H_{1,2g-1}^* \ck_{2g-1}\cong \ce\boxtimes \ck_{2g-2}$.
We define by descending recursion all the $\ck_n$ which verify (by construction) $$H^*_{1,n}\ck_{n+1}\cong \ce\boxtimes \ck_{n}, \forall n\in \Z \hbox{ hence }$$ 
$$H_1^*\ck\cong \ce\boxtimes \ck$$ 
and this ends the proof.
\section{The $\G_a$-case}

The group $\G_a$ is identified with the subgroup 
$$ \left\{\begin{pmatrix} 1& a \\ 0&1 \end{pmatrix}, a \in \C \right\}$$
of ${\rm GL}_2(\C)$ and we consider the rank 2 connections on $X$ with differential Galois group contained in $\G_a$.
These connections satisfy the short exact sequence (of connections) 
$$0\rightarrow \cox\rightarrow (\ce,\nabla)\rightarrow \cox\rightarrow 0$$
where $\cox$ is provided with the trivial connection.

\noi To prove this, we consider the representation $\rho:\pi_1(X)\rightarrow {\rm GL}_2$ associated to $(\ce,\nabla)$. Such a representation can be identified with a $\pi_1(X)$-module, namely with $\C^2$ provided with the action of $\pi_1(X)$ induced by $\rho$.
The Zariski closure of the image of $\rho$ is equal to the differential Galois group of the connection $(\ce,\nabla)$ so it lies in $\G_a$, and we have in particular $\im \rho \subset \G_a$.
The representation $\rho$, or more exactly the associated $\pi_1(X)$-module, admits a short exact sequence of $\pi_1(X)$-modules:
\[0\rightarrow \C\rightarrow \C^2\rightarrow \C\rightarrow 0\]
where $\C$ is the one-dimensional trivial $\pi_1(X)$-module. 

Hence, we obtain an exact sequence of representations of $\pi_1(X)$, the extreme terms of the sequence being trivial representations. By the equivalence of categories stated in Section \ref{equiv} we therefore have an exact sequence of connections. The trivial representations correspond to the sheaf $\cox$ provided with the trivial connection and this proves that such a connection verifies 
$$0\rightarrow \cox\rightarrow (\ce,\nabla)\rightarrow \cox\rightarrow 0$$
where $\cox$ is provided with the trivial connection.

The substack ${\rm Bun}'$ of ${\rm Bun}_2$ we will consider in order to deal with those connections is the stack of fibre bundles $\cm$ of rank 2 over $X\times_\C T$ admitting an exact sequence of the form 
$$0\rightarrow \coxxt\rightarrow \cm\rightarrow \coxxt\rightarrow 0$$
Note that the set ${\rm Bun}'(\C)$ of $\C$-points of ${\rm Bun'}$ can be seen as the affine space 
$${\rm Ext}^1_{\cox}(\co_X,\co_X)=H^1(X,\co_X)\cong \C^g.$$ 
We also note that there is no Hecke correspondence in this situation. Indeed if we consider $Hecke_1(\C)$ or $Hecke_2(\C)$, we have 
$$\cm(-[x])\subset \cm'\subset \cm,$$
because $\fract\cm/{\cm'}\cong (\fract\co/{\cox(-[x])})^i$.

\begin{itemize}
\item For $i=2$, this imposes $\cm'=\cm(-[x])$, 
\item For $i=1$, this implies $\deg \cm'<\deg \cm=0$ so in this case again, $\cm'$ cannot be an extension of $\cox$ by $\cox$.
\end{itemize}
We are therefore forced to consider a bigger subgroup and a bigger family of connections of rank 2.

\noindent Consider the subgroup 
$$\left\{ \begin{pmatrix}a & b\\ 0 & a\\ \end{pmatrix}, a \in \C^*,b\in\C \right\}$$
 which is $\G_m\times\G_a$.
The family of connections on $X$ we need to consider are those verifying an exact sequence of the form
$$0\rightarrow (\cp,\nabla_\cp)\rightarrow (\cm,\nabla)\rightarrow (\cp,\nabla_\cp) \rightarrow 0.$$
The corresponding stack ${\rm Bun}'_2$ is the stack of vector bundles satisfying such a sequence (without the connections). More precisely, ${\rm Bun}'_2(T)$ will denote the set of vector bundles $\cm$ of rank 2 over $X\times_\C T$ satisfying an exact sequence 
$$0\rightarrow \cm_1\rightarrow \cm\rightarrow \cm_1 \rightarrow 0$$ 
where $\cm_1$ is a line bundle on $X\times_\C T$. It is on the set of $\C$-points of the stack ${\rm Bun}_2'$ that we want to prove the Hecke correspondence. 

\noindent The only Hecke correspondences possible are $Hecke_1$ and $Hecke_2$. We observe that the vector bundle appearing in the centre of the exact sequence, and element of ${\rm Bun}'_2$, must have an even degree, equal to  $2\deg \cm_1$ if $\cm$ satisfies 
\[0\rightarrow \cm_1\rightarrow \cm\rightarrow \cm_1\rightarrow 0\]

\vskip0.5cm
\noi In the case of $Hecke_1$ (or more precisely $Hecke_i(\C)$), we have $\fract\cm/{\cm' }\simeq \fract\cox/{\cox(-[x])}$ so 
$$\cm(-[x])\subset \cm'\subset \cm$$
and because $\cm$ is of rank 2, we also have 
$$\fract\cm/{\cm(-[x])}\simeq (\fract\cox/{\cox(-[x])})^2,$$
therefore $\cm(-[x])\subsetneq \cm'\subsetneq \cm$, and thus $\deg \cm(-[x])<\deg \cm'<\deg \cm$, which imposes $\deg \cm'= \deg \cm-1$. Since the degrees cannot be simultaneously even, the only correspondence which can occur is $Hecke_2$.
\smallskip
Let us show that in this case if $\cm',\cm$ are elements of $ {\rm Bun}'_2(\C)$ and  $\fract\cm/{\cm'}\cong (\fract\cox/{\cox(-[x])})$ then this imposes $\cm'=\cm(-[x])$. As before, we have $$\cm(-[x])\subset \cm'\subset \cm.$$ 
Because the sheaf $\cm$  has rank 2, we have 
$$\fract\cm/{\cm(-[x])}\cong (\fract\cox /{\cox(-[x])})^2$$ and this imposes $\cm'=\cm(-[x])$. We are in a situation similar to the previous one, namely we have an isomorphism between $Hecke_2(\C)$ and the set $X\times {\rm Bun}'_2(\C)$ which gives a morphism 
\[ H_2:\left\{\begin{array}{ccc} 
X\times {\rm Bun}'_2(\C)&\rightarrow& {\rm Bun}'_2(\C) \\ 
(x,\cm)&\mapsto& \cm([x])
\end{array} \right.\]
Instead of considering the set ${\rm Bun}'_2(\C)$, we will work with the algebraic variety ${\rm Pic} X\times \C^g$ since as for the classical case, we have a map between the two sets given by 
$$(\cm \hbox{ with }{0\rightarrow \cm_1\rightarrow \cm\rightarrow \cm_1 \rightarrow 0})\mapsto ([\cm_1],{0\rightarrow \cox\rightarrow \cm_1^{-1}\otimes \cm\rightarrow \cox \rightarrow 0})$$
We write again $H_2$ for the morphism 
\[ H_2:\left\{\begin{array}{ccc}
X\times {\rm Pic} X\times \C^g&\rightarrow& {\rm Pic} X\times\C^g \\ 
(x,[\cm_1],v)&\mapsto& ([\cm_1([x])],v)
\end{array}
\right.\]
\noindent According to the previous identification for vector bundles, the connections we are interested in are given by a pair $(\cp,{0\rightarrow\cox\rightarrow \cc\rightarrow \cox \rightarrow 0})$ or simply $((\cp,\nabla_\cp),(\cc,\nabla_\cc))$, where $(\cp,\nabla_\cp)$ is a rank 1 connection and $(\cc,\nabla_\cc)$ is a rank 2 connection satisfying 
$$0\rightarrow\cox\rightarrow \cc\rightarrow \cox \rightarrow 0$$
and $\ce=\cp\otimes \cc$. We assume again that the sheaves $\cox$ are provided with the trivial connection. The connection $(\cc,\nabla_\cc)$ has therefore a Galois group included in $\G_a$. 

\begin{prop}
Let $\ce=((\cp,\nabla_\cp),(\cc,\nabla_\cc))$ be a connection of rank 2 on $X$ with differential Galois group included in $\G_a\times\G_m$. Then there exists an integrable connection $\ck$ of rank 2 on ${\rm  Pic} X\times  \C^g$ satisfying $H^*_2 \ck\cong \wedge^2 \ce\boxtimes \ck$ (isomorphism of connections).
\end{prop}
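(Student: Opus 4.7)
The plan is to build $\ck$ as an external tensor product on ${\rm Pic}X\times\C^g$, exploiting the fact that the Hecke morphism $H_2$ fixes the $\C^g$-factor and acts on ${\rm Pic}X$ through the map $H_1$ studied in the previous section. In this way the existence question reduces to the rank one case already handled by Proposition \ref{casgm}.

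First I would simplify the twisting factor $\wedge^2\ce$. Since $\ce=\cp\otimes\cc$ with $\cp$ of rank $1$ and $\cc$ of rank $2$, one has $\wedge^2\ce\cong \cp^{\otimes 2}\otimes\det\cc$. The connection $\cc$ sits in the extension $0\to \cox\to \cc\to \cox\to 0$ whose outer terms carry the trivial connection, so the induced connection on $\det\cc$ (trace of an upper triangular connection matrix) is trivial. Hence $\wedge^2\ce\cong \cp^{\otimes 2}$ as rank one connections on $X$.

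Next I would apply Proposition \ref{casgm} to $\cp^{\otimes 2}$: this produces a rank one integrable connection $\ck_1$ on ${\rm Pic}X$ satisfying $H_1^*\ck_1\cong \cp^{\otimes 2}\boxtimes \ck_1$. For the $\C^g$-factor, I would choose any rank two integrable connection $\cn$ on $\C^g$; since $\pi_1(\C^g)$ is trivial one may simply take $\cn=(\co_{\C^g}^{\oplus 2},d)$, though one might prefer a canonical choice encoding $(\cc,\nabla_\cc)$ through the identification of $\C^g$ with $H^1(X,\cox)$ as the parameter space of extensions of $\cox$ by itself. Setting $\ck:=\ck_1\boxtimes\cn$ and writing $H_2=H_1\times {\rm id}_{\C^g}$ (up to the obvious permutation of projections from $X\times {\rm Pic}X\times\C^g$), one obtains
\[
H_2^*\ck\cong (H_1^*\ck_1)\boxtimes\cn\cong (\cp^{\otimes 2}\boxtimes \ck_1)\boxtimes\cn\cong \wedge^2\ce\boxtimes\ck,
\]
and integrability is preserved under pullback and external tensor product.

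The main obstacle is not the existence argument itself, which is essentially inherited from the $\G_m$ case, but rather two verifications. First, that the induced connection on $\det\cc$ is genuinely trivial as a connection and not merely as a line bundle, so that the reduction $\wedge^2\ce\cong \cp^{\otimes 2}$ is legitimate at the level of connections. Second, that $\cn$ can be chosen canonically so as to reflect the data $(\cc,\nabla_\cc)$, so that $\ck$ carries more structural information than just a doubled copy of $\ck_1$; uniqueness is not asserted in the proposition, which is consistent with the flexibility we enjoy in choosing $\cn$.
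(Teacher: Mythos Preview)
Your argument is correct for the proposition as stated: since $H_2$ acts as the identity on the $\C^g$-factor and as $H_1$ on the ${\rm Pic}X$-factor, any external product $\ck_1\boxtimes\cn$ with $\ck_1$ the rank one Hecke eigensheaf for $\cp^{\otimes 2}$ from Proposition~\ref{casgm} and $\cn$ any rank two integrable connection on $\C^g$ (necessarily trivial, since $\pi_1(\C^g)=1$) does satisfy $H_2^*\ck\cong\wedge^2\ce\boxtimes\ck$. Your computation of $\wedge^2\ce\cong\cp^{\otimes 2}$ is also the same as the paper's.

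The paper, however, takes a genuinely different route in constructing the rank two part. Rather than placing a trivial rank two bundle on the $\C^g$-factor, it builds on each ${\rm Pic}^dX\times\C^g$ a rank two connection $\ca_d$ coming from the $\G_a$-representation $\pi_1(X)\to\G_a$ associated to $(\cc,\nabla_\cc)$, transported through the isomorphism $\pi_1({\rm Pic}^dX\times\C^g)\cong\pi_1(X)_{ab}$; it then sets $\ck_d=\cl_d\otimes\ca_d$ with $\cl_d=\tilde\ck_d\boxtimes\co_{\C^g}$. Proving the Hecke property for this $\ca_d$ requires an extra homotopy argument (showing that the twist $(x,\cl)\mapsto(x,\cl([x]-[P_0]))$ induces the identity on $\pi_1$), which your construction bypasses entirely. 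What the paper's approach buys is that the resulting $\ck$ actually remembers the datum $(\cc,\nabla_\cc)$ and has differential Galois group in $\G_a\times\G_m$, in keeping with the spirit of a Langlands-type correspondence; your $\ck$ is isomorphic to $\ck_1^{\oplus 2}$ and depends only on $\cp$. You correctly flag this point yourself in your final paragraph, and since the proposition asserts neither uniqueness nor any compatibility with $\cc$, your shorter argument is a legitimate proof of exactly what is claimed.
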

\begin{proof}
\noindent As before, we write 
$${\rm Pic} X\times \C^g=\bigcup {\rm Pic}^d X \times \C^g.$$ 
We will construct $\ck_d$, the restriction of $\ck$ to ${\rm Pic}^d X\times \C^g$. We search $\ck_d$ as a product $\cl_d\otimes \ca_d$ of a rank 1 connection $\cl_d$ and a connection $\ca_d$ of rank 2 and which satisfies an exact sequence of the form
$$0\rightarrow \cob \rightarrow \ca_d\rightarrow \cob \rightarrow0,$$ 
where the sheaf $\cob$ is provided with the trivial connection. Here the sheaf $\ck_d$ is provided with the connection $\nabla_{\cl_d}\otimes \nabla_{\ca_d}$. We will omit to specify the connections and only write the vector bundles.

\noindent According to the $\G_m$-case, we know there exists a connection $\tilde{\ck}$ on ${\rm Pic} X$ associated to the rank 1 connection $\cp^{\otimes 2}$ on $X$. Let $\cl:=\tilde{\ck}\boxtimes \co_{\C^g}$ denote the connection on ${\rm Pic}X\times\C^g$. 
\vskip0.1cm

Now we want to construct a rank 2 connection. Since $\C^g$ has a trivial fundamental group one has
$$\pi_1({\rm Pic}^1X\times\C^g)\cong \pi_1({\rm Pic}^1 X)\simeq\pi_1(X)_{ab}.$$ 
Moreover the differential Galois group of the connection $\cc$ is contained in $\G_a$ so the latter corresponds to a representation $\pi_1(X)\rightarrow \G_a$. By the above isomorphism, this representation induces a representation $\pi_1({\rm Pic}^{1}X\times \C^g)\rightarrow \G_a$ which is associated to a connection denoted by $\ca_1$ on ${\rm Pic}^1X\times\C^g$ .
Fix a point $P_0\in |X|$. We have natural isomorphisms 
$$t_d:{\rm Pic}^1X\times \C^g\rightarrow {\rm Pic}^{d+1}X\times\C^g$$ 
given by $(\cl,v)\mapsto (\cl(d[P_0]),v)$. These isomorphisms give isomorphisms denoted by $\widetilde{t_n}$ between the fundamental groups (and independent from the choice of $P_0$). Thus, for any integer $d$, we have an isomorphism $\pi_1(X)_{ab}\stackrel{\sim}{\rightarrow}\pi_1({\rm Pic}^{d}X\times \C^g)$ which gives a connection $\ca_d$ on ${\rm Pic}^dX\times\C^g$.
 
It remains to prove that the connection $\ck_d$ on ${\rm Pic}^dX\times\C^g$ defined by $\ck_d:=\cl_d\boxtimes \ca_d$ satisfies
$$H^*_2 \ck\cong \wedge^2 \ce\boxtimes \ck.$$
We consider the following map
$$H_{2,d}: X\times {\rm Pic}^d X\times \C^g\rightarrow  {\rm Pic}^{d+1} X\times\C^g.$$
Write ${\tilde \ck}_d$ for the restriction on ${\rm Pic}^d X$ of the connexion $\tilde \ck$ (which is associated to $\cp^{\otimes 2}$). By definition, we have 
$$\cl_d={\tilde \ck}_d\boxtimes \co_{\C^g}$$ 
where the sheaf $\co_{\C^g}$ is provided with the trivial connection. The sheaf $\cl_d$ then satisfies
$$H_{2,d}^*\cl_{d+1}=H_{1,d}^*{\tilde \ck}_{d+1}\boxtimes \co_{\C^g}\cong \cp^{\otimes 2}\boxtimes {\tilde \ck}_d\boxtimes \co_{\C^g}=\cp^{\otimes2}\boxtimes \cl_d.$$
Note that we have 
$$\wedge^2 \ce=\wedge^2(\cc\otimes \cp)=\wedge^2 \cc \otimes \cp^{\otimes 2}$$
because $\cp$ has rank 1. Moreover, $\cc$ admits a short exact sequence
\[0\rightarrow \cox\rightarrow \cc\rightarrow \cox\rightarrow 0\]
and thus we have $\wedge^2 \cc=\cox$.
It therefore suffices to prove that we have 
$$H_{2,d}^* \ca_{d+1}\cong \cox \boxtimes \ca_d$$
where on $\cox$ we have the trivial connection.
In order to prove this, we write the following commutative diagram
$$\xymatrix{ X\times {\rm Pic}^d X\times\C^g \ar[r]^{H_{2,d}}\ar[d]_{F}&{\rm Pic}^{d+1} X\times\C^g\\
  X\times {\rm Pic}^dX\times\C^g \ar[ru]_ { u_d\circ p_2}& }$$
where $p_2$ denotes the second projection $X\times {\rm Pic}^dX\times\C^g \rightarrow {\rm Pic}^dX\times\C^g$, $u_d$ denotes the isomorphism between $  {\rm Pic}^d\times\C^g$ and ${\rm Pic}^{d+1}X\times\C^g$ defined by $(\cl,\nu)\mapsto (\cl([P_0]),\nu)$ and $F$ stands for the isomorphism given by $(x,\cl)\mapsto (x,\cl([x]-[P_0]))$. By commutativity of this diagram, we have $H_{2,d}=u_d \circ p_2 \circ F$ so it suffices to prove that $F^* p_2^* u_d^* \ca_{d+1} \cong \ca_d\boxtimes \cox$. By construction, we have $u_d^* \ca_{d+1}\cong \ca_d$ because the following diagram commutes
$$\xymatrix{
 {\rm Pic}d X \times \C^g\ar[r]^{u_d} & {\rm Pic}^{d+1}X\times \C^g \\
{\rm Pic}^1X \ar[u]_{t_d}  \ar[ru]_{ t_{d+1}} &  }$$

\noindent Moreover, we have
$$\ca_d \boxtimes \cox =p_2^* \ca_d\otimes_{{\cal O}_{X\times {\rm Pic}^d\times \C^g}} p_1^* \cox $$ 
and 
$$p_1^* \cox= p_1^{-1} \cox \otimes_{p_1^{-1}\cox } {\cal O}_{X\times {\rm Pic}^dX\times \C^g} \cong {\cal O} _{X\times {\rm Pic}^dX\times \C^g}$$
hence 
$$\ca_d \boxtimes \cox \cong p_2^* \ca_d \hbox{ (as connections).}$$ 
Finally, let us show that 
$$F^* (\ca_d \boxtimes \cox) \cong \ca_d \boxtimes \cox.$$ 
For this purpose, we now work with the representations. If $\eta$ denotes the representation associated to the sheaf $\ca_d \boxtimes \cox$, the representation associated to $F^* (\ca_d \boxtimes \cox)$ is $\eta \circ F$ and we are now going to prove that these two representations are equal by proving that the two loops $L$ and $F(L)$ are homotopic $\forall L\in \pi_1(X\times {\rm Pic}^dX\times \C^g)$. To simplify notations, we will write the homotopy from the point of view of divisors. Let 
$$t\mapsto L(t)=(x(t),D(t),\nu(t))$$ 
be a loop of $X\times {\rm Pic}^d\times\C^g$. The map $H$ defined by 
$$H(s,t)=(x(t),D(t)+ x(ts)-P_0)$$ 
sends continuously $L$ to  $F(L)$, so 
$$\eta(L)=\eta\circ F(L),~\forall L\in \pi_1(X\times {\rm Pic}^dX\times\C^g).$$ 
This implies that the two connections $\ca_d\boxtimes \cox$ and $F^*(\ca_d\boxtimes \cox)$ are associated to the same representation and are therefore isomorphic. We have thus proved that the connections $\ck_d$ satisfy the Hecke property.
\end{proof}

\begin{rk}
In this example, we consider connections with differential Galois group included in $\G_a\times\G_m$ which is not a reductive group. We are therefore slightly outside the frame of the geometric Langlands correspondence and this might be why only $Hecke_2$ holds in this case.
\end{rk}

The previous proof can be adapted to any connections with Galois group included in a group of the form $\G_a^n\times \G_m^k$, with $k,n$ positive integers. For example, the group of matrices of the type $\begin{pmatrix}a&0&b\cr 0&b&0\cr 0&0&a\end{pmatrix}$ is $\G_m\times \G_m\times \G_a$ whereas the group of matrices of the type $\begin{pmatrix}a&0&b\cr 0&a&c\cr 0&0&a\end{pmatrix}$ is $\G_m\times\G_a\times\G_a$. Note that we only consider commutative groups. For non-commutative groups, the previous method cannot be used.
 
\centerline{Bureau 109, Institut de mathématiques de Toulouse, 118 route de Narbonne, 31062 Toulouse Cedex 9 }
\centerline{{\it email:} poirier@math.ups-tlse.fr}

\begin{thebibliography}{WWW99}

\bibitem[BG]{BG}\textbf{ Bernstein \& S. Gelbart}(editors)\textit{ An introduction to the Langlands program} Birkhäuser, 2004


\bibitem[Fre]{Fre}\textbf{E. Frenkel}
\textit{ Recent advances in the Langlands program} \\
ArXiv: math.AG/0303074 v2 30 Sep 2003

\bibitem[FGV]{FGV}\textbf{E. Frenkel, D. Gaitsgory \& K. Vilonen}\textit{ On the geometric Langlands conjecture} Amer. J. Math Soc. 15, 2002, p367- 417


\bibitem[God]{God}\textbf{R. Godement}\textit{ Notes on the 'Jacquet-Langlands' theory},  (mimeographed notes), The Institute for Advanced Study, 1970.


\bibitem[Gre]{Gre}\textbf{M. Greenberg}\textit{ Algebraic topology} Benjamin, New York, 1967.

\bibitem[Laf]{Laf}\textbf{L. Lafforgue}
\textit{Chtoucas de Drinfeld et correspondance de Langlands} Invent. Math. \textbf{147}, 2002, pp. 1-241.

\bibitem[Lau]{Lau}\textbf{ G. Laumon}\textit{ Travaux de Frenkel, Gaitsogory et Vilonen sur la correspondance de Drinfeld-Langlands}, S\'eminaire Bourbaki, 54\`eme ann\'ee, n. 906, 2001-2002.

\bibitem[LMB]{LMB}\textbf{ Laumon, Moret-Bailly}
\textit{Champs alg\'ebriques} Springer-Verlag, 2000.

\bibitem[Mil]{Mil}\textbf{J.S Milne}
\textit{chp VII 'jacobian varieties'} \underline {Arithmetic geometry}
edited by Cornell-Silverman Springer-Verlag, 1985

\bibitem[vdPS]{vdPS}\textbf{M. van der Put \& M. Singer}
\textit{Galois theory of linear differential equations} Springer-Verlag 2003.

\bibitem[Pia]{Pia}\textbf{I.I Piatetski-Shapiro}\textit{ Multiplicity one theorem}, In automorphic forms, representations and $L$--functions, Proc. Symp. Pure Math. \textbf{33}, AMS 1979,\\
pp. 209-212.


\end{thebibliography}
\end{document}